\newtheorem{theorem}{Theorem}[section]
\newtheorem*{theorem*}{Theorem}
\newtheorem{lemma}[theorem]{Lemma}
\newtheorem{corollary}[theorem]{Corollary}
\newtheorem*{conjecture*}{Conjecture}
\journal{Discrete Mathematics}
\begin{document}

\begin{frontmatter}



\title{Eigenvalues of neutral networks: interpolating between hypercubes}


\author[1,2]{T. Reeves}
\author[1,3]{R. S. Farr}
\author[1,4]{J. Blundell}
\author[1]{A. Gallagher}
\author[1,5]{T. M. A. Fink}

\address[1]{London Institute for Mathematical Sciences, 35a South St, London W1K 2XF, UK}
\address[2]{Department of Mathematics, Princeton University, Princeton, NJ, USA}
\address[3]{Unilever R\&D, Colworth Science Park, MK44 1LQ, Bedford, UK}
\address[4]{Department of Physics, Stanford University, Stanford, CA, USA}
\address[5]{Centre National de la Recherche Scientifique, Paris 75248, France}

\begin{abstract}
A neutral network is a subgraph of a Hamming graph, and its principal eigenvalue determines its robustness: the ability of a population evolving on it to withstand errors.
Here we consider the most robust small neutral networks: the graphs that interpolate pointwise between hypercube graphs of consecutive dimension
(the point, line, line and point in the square, square, square and point in the cube, and so on).
We prove that the principal eigenvalue of the adjacency matrix of these graphs is bounded by the logarithm of the number of vertices, 
and we conjecture an analogous result for Hamming graphs of alphabet size greater than two.
\end{abstract}

\begin{keyword}
05C50 \sep graph eigenvalue \sep hypercube \sep neutral networks \sep evolvability
\end{keyword}

\end{frontmatter}


\noindent
The eigenvalues of neutral networks---subgraphs of Hamming graphs---is a fascinating subject, yet one which seems to have received little attention from the mathematics community.
A recent surge of scientific interest has been motivated by advances in the theory of neutral evolution \cite{wagner, draghi},
in which the evolution of a mutating population is captured by spectral properties of its underlying neutral network \cite{van N}.

A \emph{genome} is the set of all genotypes, or $a$-ary strings, of length $d$ and alphabet size $a$. 
Typically $a$ is small: $a=2$ (hydrophilic and hydrophobic), $a=4$ (nucleic acids) or $a=20$ (amino acids).
On the other hand, $d$ can range from 3 (codons) to $10^8$ (chromosomes).
We represent the genome by a $d$-dimensional {\it Hamming graph} $H_{d,a} \equiv (K_a)^d \equiv (K_a \Box \dots \Box K_a)$, where $K_a$ is the complete graph on $a$ vertices and $\Box$ is the Cartesian product \cite{Sabidussi}. Each of the $a^d$ vertices in the Hamming graph corresponds to a genotype, and two vertices share an edge if the genotypes differ by a single mutation (Hamming distance one).
A \emph{neutral network} is the set of genotypes with the same phenotype (observable characteristics); it is just a subgraph of $H_{d,a}$.
In this Note we use neutral network and phenotype interchangably.

\begin{figure}[b!]
\begin{center}
\includegraphics[width=\columnwidth]{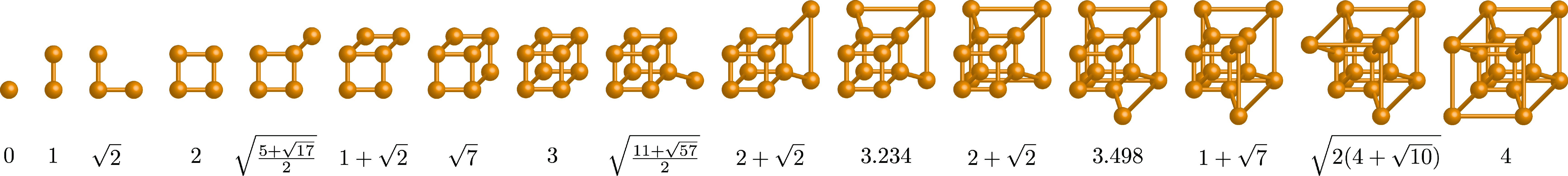}
\caption{\label{builders_clusters}
The first 16 ``bricklayer's graphs," which are the Hamming graphs and the interpolations between them. Below each is the principal eigenvalue of its adjacency matrix.
}
\end{center}
\end{figure}

Assuming the phenotype has achieved high fitness, adjoining phenotypes will have a relatively negligible growth rate and act as effective absorbing boundaries.
Now consider a mutating population on the neutral network. That portion which mutates off of it will be lost, whereas that portion which stays on will survive.
The \emph{robustness} $r$ of a neutral network is the long-term probability that a randomly selected individual mutating in a random direction survives.
It is the principal eigenvalue $\lambda$ of the adjacency matrix of the neutral network divided by the number of directions for mutation: $r = \lambda/(d(a-1))$.
This Note is about the maximum robustness that a neutral network can have, 
which for small neutral networks are themselves Hamming graphs and interpolations between them, shown for $a=2$ in Figure 1. 
Surprisingly, this ceases to be true for larger neutral networks on longer sequences ($d=19$ and above), which we demonstrate by way of counterexamples in the form of star graphs.



This Note is divided into six parts. 
In the first part, we derive the relation between the robustness $r$ of a neutral network and its principal eigenvalue $\lambda$: $r = \lambda/(d(a-1))$.
In the second part, we present our main result, $\lambda_n \leq \log_2 n$, where $\lambda_n$ is the eigenvalue of the $n$th bricklayer's graph (see Figure 1).
We prove it for all values of $n$ apart from $n=2^d\pm1$ using a geometric staircase argument.
In the third part, we set the stage for the remainder of the proof by constructing polynomials with eigenvalues $\lambda_{2^d \pm 1}$ as roots.
In the fourth and fifth parts, we bound the eigenvalues $\lambda_{2^d-1}$ and $\lambda_{2^d+1}$.
In the sixth part we conjecture a generalisation of our main result for higher $a$ and conclude with some extensions. 

A secondary motivation of this paper is to introduce mathematicians to the connection between neutral networks and spectral graph theory and to encourage them to extend this work.

\section{Relation between the robustness and the principal eigenvalue}
\noindent
In this section we derive the relation $r = \lambda/(d(a-1))$, first given in \cite{van N}. First, we define the phenotype robustness $r$ as a weighted average over genotype robustness; second, we define it as the extent to which mutation off the neutral network does not deplete the growth due to fitness.
Readers who are less interested in the biological motivation may skip this section.

\emph{Genotype and phenotype robustness.}
Consider a neutral network $P$, and let its adjacency matrix be $A$. The genotype robustness $r_i$ \cite{wagner} of a genotype $g_i$ is the probability of a mutation being neutral: the number of neutral edges incident to $g_i$ (i.e. edges which do not lead to a different phenotype)
divided by the total number of incident edges $d(a-1)$. The genotype robustness can therefore be written as 
\begin{align}
\label{geno}
r_i = \frac{\sum_j A_{ij}}{d(a-1)}.
\end{align}
For a neutral network, let $n(t)$ be its population vector at time $t$, with the $i$th component $n_i(t)$ corresponding to the population on genotype $g_i$. The normalized population is distributed according to $n(t)/\sum_i n_i(t)$. Suppose for now that in the limit $t\to\infty$, the normalized population is distributed according to a unique distribution. Then we define the phenotype robustness $r$ to be the long-time population-weighted average of the genotype robustnesses $r_i$:
\begin{align}
\label{pheno}
r = \frac{\sum_i n_i(\infty) r_i}{\sum_i n_i(\infty)}.
\end{align}
It is the fraction of the population flux that is neutral. We will now determine this limit.

\emph{Mutational flux and fitness.} Mutation induces a population flux across neighbouring genotypes. If the mutation rate per letter is $\mu$, the mutational flux is $1-(1-\mu)^{d(a-1)} \approx \mu d(a-1)$ for $\mu d(a-1) \ll 1$. It is the fraction of a population that mutates per generation. 
Some of this mutational flux will also cross phenotypic boundaries when neighbouring genotypes lie in two different phenotypes. That which does not cross phenotypic boundaries is neutral. The fitness $f$ is the raw reproductive rate of the phenotype. After $t$ generations, the total population of a neutral network will have changed by a factor of $f^t$, in the absence of mutations.


{\it Mutation matrix}. 
The action of mutation on the population distribution over a single generation can be expressed by the mutation matrix $M$: 
\begin{align}
M = (1-\mu d(a-1)) I + \mu A.
\label{K}
\end{align}
The first term is the probability that no mutation occurs and the second the probability of mutating.
Being symmetric, $A$ can be diagonalised by an orthonormal set of eigenvectors $x_i$:
\begin{align}
M = (1-\mu d(a-1))\sum_i x_i x_i^\intercal + \mu \sum_i x_i \nu_i x_i^\intercal,
\label{mutation_matrix}
\end{align}
where the $x_i$ satisfy the eigenvalue equation $A x_i = \nu_i x_i$. The population vector $n(t)$ is obtained by transforming an initial vector $n_0$ by $M^t$ and multiplying it by $f^t$:
\begin{align}
\label{population_distribution}
n(t) = f^t M^t n(0) = \sum_i x_i^\intercal n(0) f^t \left(1-\mu d(a-1) \left(1-\frac{\nu_i}{d(a-1)}\right)\right)^t x_i.
\end{align}
Let $\nu_1$ be the largest (principal) eigenvalue of $A$, denoted hereafter $\lambda$.
Since $\lvert \nu_i \rvert \leq d$,  all terms $i>1$ decay exponentially with respect to the first for $\mu>0$. 
In the large time limit the sum is dominated by the first term, whose eigenvalue $\nu_1 \equiv \lambda$ is largest:
\begin{align}
n_t \approx x_1^\intercal n(0) f^t \left(1-\mu d(a-1) \left(1-\frac{\lambda}{d(a-1)}\right)\right)^t x_1.
\label{n_t_large_time_limit}
\end{align}
We now show that defining the robustness as $\lambda/(d(a-1))$ agrees with the definition of phenotype robustness in (\ref{pheno}). Indeed, plugging (\ref{geno}) into (\ref{pheno}),
\begin{align*}
r &= \frac{\sum_{ij} A_{ij}n_i(\infty)}{d(a-1)\sum_i n_i(\infty)} \\
&= \frac{\sum_{j}\lambda n_j(\infty)}{d(a-1)\sum_i n_i(\infty)} \\
&= \frac{\lambda}{d(a-1)}.
\end{align*}

The quantity $r$ therefore measures how well the shape of the neutral network can reduce the rate of deleterious mutation acting on the population as a whole. We see from (\ref{n_t_large_time_limit}) that at large time $t$ at every generation, a fraction $\mu d(a-1)(1-r)$ of the population mutates off the neutral network, and the growth rate $(1 - \mu d (1-r)) f$ is the fitness that can be usefully employed to increase the population and not spent replenishing population lost to deleterious mutations incurred at the boundary. The steady state distribution of the population depends only on the shape of the neutral network and on neither the mutation rate $\mu$ nor the fitness $f$.

\section{Neutral networks with large eigenvalues}
\noindent
\emph{Bricklayer's graphs.}
Just how robust a phenotype can be---or how large an eigenvalue a neutral network can have---has remained an open question. 
For short sequences ($d \leq 4, a=2$), we found from exhaustive enumeration that the most robust neutral networks are themselves hypercubes  
or interpolations between them, illustrated in Figure 1.
Computational sampling for slightly longer sequences ($5 \le d \le 9, a=2$) agrees with this. 
We generalize the sequence of graphs and interpolations between them in Figure 1 for $a>2$ as follows: 
suppose all vertices $\{q\}$ in $H_{d,a}$ are labelled as integers from 0 to $a^d-1$, and two vertices share an edge if their base $a$ representations differ in exactly one digit.
Then $G_{n,a}$ is the subgraph induced by the vertices $q < n$. 
We call these graphs $G_{n,a}$ ``bricklayer's graphs'' because they form the sequence by which a bricklayer would instinctively fill in the Hamming graph $H_{d,a}$.
For the remainder of this Note we set the alphabet size $a=2$, so we are only concerned with hypercubes and their subgraphs.
For simplicity we denote $G_{n,2}$ by $G_n$.
We conjecture an extension of our main result for general $a$ in the Conclusion.

For neutral networks on sequences of short length $d$, the bricklayer's graphs $G_n$ are the most robust; they have maximal principal eigenvalues.
For longer lengths $d$, however, a surprise is in store: the $G_n$ are \emph{not} the most robust neutral networks. 
In particular, we discovered the following counterexamples for $d=19$ and above.
Let $S_n$ be the star graph: a tree with one internal vertex and $n$ leaf vertices. 
The principal eigenvalue of $S_n$ is readily found to be $\sqrt{n}$. 
Now let us compare the eigenvalue of a star of $n$ vertices, $S_{n-1}$, to the eigenvalue $\lambda_n$ of $G_n$.
As we prove below, $\lambda_n \leq \log_2 n$.
For the bricklayer's graphs $G_n$ to win, we need $\sqrt{n-1} < \lambda_n$, implying 
\begin{align*}
\sqrt{n-1} <  \log_2 n.
\end{align*}
However, this not true for $n \ge 20$. It is an open question as to what shape does maximize the eigenvalue for larger graphs. In the concluding remarks of \cite{friedman}, the authors consider the possibility that Hamming balls (graphs consisting of all points that are most a given distance from a point) are asymptotic maximizers of hypercube subgraphs, but then provide some evidence that they are not.

\emph{Our main result.}
In this Note we prove that the principal eigenvalue $\lambda_{n}$ of the bricklayer's graph $G_{n}$ satisfies $\lambda_n \leq \log_2 n$.
Our general approach is to show by a geometric staircase argument that for $d \ge 3$, a slightly stronger inequality ($\lambda_n < \log_2 (n-1)$) holds for most $n$; it will then suffice to examine the cases where $n = 2^d \pm 1$, using polynomials that have $\lambda_{2^d \pm 1}$ as roots.

\begin{theorem*}
\label{bricklayerclustertheorem}
For all graphs $G_n$, we have $\lambda_n \leq \log_2 n$, with equality if and only if $n$ is a power of 2.
\end{theorem*}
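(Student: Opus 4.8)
The plan is to induct on $n$ using two elementary structural facts about the graphs $G_n$. First, splitting the vertices $0,\dots,2n-1$ by the value of their lowest bit exhibits $G_{2n}$ as the prism $G_n\,\Box\,K_2$: each parity class spans a copy of $G_n$, and the two classes are joined by a perfect matching. Since eigenvalues add under the Cartesian product, this gives the doubling identity $\lambda_{2n}=\lambda_n+1$. Second, $G_n$ is an induced subgraph of the connected graph $G_{n+1}$, so $\lambda_n<\lambda_{n+1}$ by (strict) interlacing. I read the ``geometric staircase'' as the recursive picture behind these facts: for $2^{d-1}<n\le 2^d$ the graph $G_n$ is a copy of $H_{d-1}$ with $G_{\,n-2^{d-1}}$ stacked on top along a partial matching, and unwinding this presents $G_n$ as a nested tower of hypercubes whose profile is a staircase, along which one climbs by repeated use of the two facts above.

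For even $n=2m$ the step is immediate: $\lambda_n=\lambda_m+1\le\log_2 m+1=\log_2 n$, and by induction equality forces $m$, hence $n$, to be a power of $2$. The subtle case is odd $n=2m+1$, where bare monotonicity only gives $\lambda_n\le\lambda_{n+1}=\lambda_{m+1}+1\le\log_2(m+1)+1=\log_2(n+1)$, overshooting the target by the tiny amount $\log_2(1+1/n)$. I would recover this loss by carrying the \emph{strengthened} inequality $\lambda_k<\log_2(k-1)$ through the induction: whenever it is available for $m+1$ it yields $\lambda_n\le\lambda_{m+1}+1<\log_2 m+1=\log_2(n-1)$, which both closes the step and reproduces the strengthened bound for $n$. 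This stronger inequality genuinely fails only when $k\in\{2^d,\,2^d\pm1\}$; the first two cases force $n\in\{2^{d+1}-1,\,2^{d+1}+1\}$ (the deferred exceptional indices), while $m+1=2^d-1$ produces a short cascade of indices adjacent to $2^{d+1}$ (beginning with $n=2^{d+1}-3$) that I would handle instead through $\lambda_n\le\lambda_{2^d-1}+1$ fed by a \emph{sharpened} estimate for $\lambda_{2^d-1}$. A finite set of small indices where the strengthened bound can fail (for instance $n=6$, where $\lambda_6=1+\sqrt2>\log_2 5$) lies in the range $d\le4$ already settled by direct enumeration, which seeds the induction.

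It remains to treat the exceptional indices $n=2^d\pm1$. Here $G_{2^d-1}$ is the hypercube $H_d$ with its all-ones vertex deleted and $G_{2^d+1}$ is $H_d$ with a pendant vertex attached at a corner; in both cases the coordinate-permuting automorphisms of $H_d$ fixing that vertex persist, so the Perron eigenvector is constant on Hamming-weight shells. Quotienting by this equitable partition collapses the eigenvalue equation to a tridiagonal Jacobi matrix of size $d$ (respectively $d+2$), with off-diagonal entries $\sqrt{(d-j)(j+1)}$, obtained from the shell quotient of $H_d$ by truncating the last row and column (respectively by bordering it with one extra state for the pendant). Its characteristic polynomial $p_d$ satisfies a three-term recurrence, is a close relative of the Krawtchouk polynomials, and has largest root exactly $\lambda_{2^d\pm1}$. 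It then suffices to check that $p_d$ is positive at, and increasing beyond, the value $\log_2(2^d\pm1)$---indeed at the slightly smaller values $\log_2(2^d-\tfrac32)$ and $\log_2(2^d+\tfrac12)$ that the odd-index step above consumes---which reduces to a one-variable estimate read off from the recurrence.

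I expect the binding difficulty to be quantitative rather than structural. The root $\lambda_{2^d-1}$ sits only $\Theta(d\,2^{-d})$ below $d$, and the slack in the sharpened target $\log_2(2^d-\tfrac32)$ is of the same exponentially small order, so the estimates on the Krawtchouk-type polynomials in the last part must be genuinely sharp and uniform in $d$, not merely asymptotic; moreover they must be calibrated to deliver \emph{precisely} the strengthened inequality that the odd-index cascade requires at each scale, so that no power-of-two-adjacent index is left uncovered. Making the staircase induction and the polynomial bounds meet exactly at this margin is, I think, the real content of the proof.
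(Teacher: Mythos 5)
Your outline follows the paper's own proof almost exactly: the two structural facts ($G_{2n}=G_n\,\Box\,K_2$ giving $\lambda_{2n}=\lambda_n+1$, and strict monotonicity of $\lambda_n$ under taking induced subgraphs), the staircase induction carrying the strengthened inequality $\lambda_n<\log_2(n-1)$ with a finite numerical seed, the reduction to the exceptional indices $n=2^d\pm1$, and the shell (equitable-partition) quotient producing a tridiagonal matrix whose characteristic polynomial satisfies the three-term recurrence $p_r=\lambda p_{r-1}-r(d-r+1)p_{r-2}$ are all precisely the paper's ingredients, and your bookkeeping of which odd indices escape the staircase is consistent with the paper's version of the same induction.

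The one genuine gap is the step you yourself flag as the real content: you assert that bounding the largest root of $p_d$ by $\log_2(2^d-\tfrac32)$ and $\log_2(2^d+\tfrac12)$ ``reduces to a one-variable estimate read off from the recurrence,'' but checking that $p_d$ is positive and increasing at such a transcendental point is not something the recurrence hands you, and positivity of $p_d$ and $p_d'$ at a point does not by itself place the point above the largest root. The paper's device for closing this is worth knowing: it evaluates the (monic, real-rooted, hence eventually convex) polynomial and its derivative at the \emph{integer} $d$, where closed forms exist ($P_{2^d-1}(d)=d!$ and $P'_{2^d-1}(d)=d!\sum_{j=0}^{d-1}2^j/(j+1)$, the latter by a nontrivial induction), and then uses the tangent-line underestimate of a convex function to get the Newton-type bound $\lambda_{2^d\pm1}<d-P(d)/P'(d)$, which is sharp to the required order $\Theta(d\,2^{-d})$; the $+1$ case additionally needs the bridge identity $\chi_{2^d+1}=\lambda\chi_{2^d}-\chi_{2^d-1}$ and an interlacing argument to establish convexity. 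Without some such explicit evaluation your plan stops exactly where the exponentially small margin must actually be won.
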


Equality is attained in the Theorem if $n$ is a power of $2$ since $\lambda_n$ must lie between the mean and maximum vertex degree \cite{Brouwer}, and for $n$ a power of 2, all vertices are of degree $\log_2 n$.
We wish to show that if $n$ is not a power of $2$, there is strict inequality. We make two observations. Observation 1: Since the principal eigenvalue of a proper subgraph of a connected graph is less than the principal eigenvalue of the graph itself, it follows that if $n < m$ then $\lambda_n < \lambda_m$. Observation 2: Since $G_{2n} = G_{n} \Box K_2$, and the spectrum of a Cartesian product of graphs is the sum of their individual spectra \cite{Brouwer}, it follows that if $\lambda_n < \log_2 n$ then $\lambda_{2n} = \lambda_n + 1 < \log_2 2n$. Using these observations, we claim:

\begin{lemma}\label{reducetoedgecases} 
The Theorem is true for all $n$ if for some $k$,
\begin{align}
\label{strongercondition}
\lambda_n < \log_2 (n-1) \textnormal{ for $2^k + 2 \le n \le 2^{k+1} - 1$},
\end{align} 
and also
\begin{align}
\label{minusapointtheorem}
&\textnormal{$\lambda_{2^d-1} < \log_2 (2^d-2)$, $d \geq 5$, and} \\
\label{plusapointtheorem}
&\textnormal{$\lambda_{2^d+1} < \log_2 (2^d+\tfrac12)$, $d \geq 3$}.
\end{align}
\end{lemma}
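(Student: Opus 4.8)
The plan is to run an induction on $k$ that lifts the ``stronger condition'' (\ref{strongercondition}) from one dyadic block $[2^k+2,\,2^{k+1}-1]$ to the next, powered by Observations~1 and 2 and by the two endpoint bounds (\ref{minusapointtheorem}) and (\ref{plusapointtheorem}). Let $P(k)$ abbreviate the statement ``$\lambda_n<\log_2(n-1)$ for every $n$ with $2^k+2\le n\le 2^{k+1}-1$''. By hypothesis $P(k_0)$ holds for some $k_0$, and---since the finitely many $n$ below the first usable block are already visible in Figure~1 and the computational survey---it will be enough to prove the implication $P(k)\Rightarrow P(k+1)$ for $k\ge 3$ and then to read the Theorem off from $\{P(k)\}$ and (\ref{plusapointtheorem}).

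For the inductive step I would fix $n$ in $[2^{k+1}+2,\,2^{k+2}-1]$ and split on its parity. If $n=2m$ then $m\in[2^k+1,\,2^{k+1}-1]$ and $\lambda_n=\lambda_m+1$ by Observation~2: when $m$ already lies in the block $[2^k+2,\,2^{k+1}-1]$ the hypothesis $P(k)$ gives $\lambda_n<\log_2(m-1)+1=\log_2(n-2)<\log_2(n-1)$, and the single leftover even value $n=2^{k+1}+2$ (where $m=2^k+1$) is dispatched by (\ref{plusapointtheorem}), since $\lambda_{2^k+1}+1<\log_2(2^k+\tfrac12)+1=\log_2(2^{k+1}+1)=\log_2(n-1)$. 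If $n=2m+1$ then, because $G_{2m+1}$ is a proper subgraph of the connected graph $G_{2m+2}=G_{m+1}\,\Box\,K_2$, Observations~1 and 2 give $\lambda_n<\lambda_{2m+2}=\lambda_{m+1}+1$; for $n$ in the interior of the block this forces $m+1\in[2^k+2,\,2^{k+1}-1]$, so $P(k)$ yields $\lambda_{m+1}<\log_2 m$ and hence $\lambda_n<\log_2(2m)=\log_2(n-1)$, while the one remaining odd value $n=2^{k+2}-1$ is exactly (\ref{minusapointtheorem}). This establishes $P(k+1)$, the two endpoint bounds being applied with $d=k$ and $d=k+2$, which is why $k\ge 3$ suffices.

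It then remains to harvest the Theorem. Powers of $2$ give equality, as already noted. For any other $n$ set $k=\lfloor\log_2 n\rfloor$; either $n=2^k+1$, where (\ref{plusapointtheorem}) gives $\lambda_n<\log_2(2^k+\tfrac12)<\log_2 n$, or $n\ge 2^k+2$, where $P(k)$ gives $\lambda_n<\log_2(n-1)<\log_2 n$; combined with the finitely many small cases checked by hand, this is the desired strict inequality for every non-power of $2$.

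I expect the only real obstacle to be the odd interior indices: Observation~2 propagates the bound transparently only through the doubling map $m\mapsto 2m$, so the essential move is to recognise $G_{2m+1}\subset G_{2m+2}=G_{m+1}\,\Box\,K_2$ and to invoke the inductive hypothesis at $(n+1)/2$ rather than at $n/2$. Everything else is bookkeeping---verifying that the two block endpoints are precisely the cases ruled on by (\ref{minusapointtheorem}) and (\ref{plusapointtheorem}) in their stated ranges of $d$, and confirming that the base block can be taken with $k_0\ge 3$, with any lower blocks absorbed into the initial finite check.
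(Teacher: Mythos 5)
Your proposal is correct and is essentially the paper's own ``staircase'' argument: the same induction on dyadic blocks driven by Observation~2 (doubling), Observation~1 (bounding an odd index by the even index above it, i.e.\ $\lambda_{2m+1}<\lambda_{2m+2}=\lambda_{m+1}+1$), the two endpoint bounds (\ref{minusapointtheorem})--(\ref{plusapointtheorem}) for $n=2^{d}\pm1$, and a finite numerical check for the small cases. The only difference is presentational---you index by the target $n$ and split on parity, while the paper writes the same chain of inequalities indexed by the source $n$.
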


\begin{proof}
The ``staircase" argument is illustrated in Figure \ref{staircase_figure}. We verify numerically that the Theorem is true for $n \leq 16$ and (\ref{strongercondition}) holds for $k=3$. Now if (\ref{strongercondition}) is true for some $k$, then by Observation 2,
\begin{align}
\label{overlappinginequalities}
\lambda_{2n}  < \log_2 (2n-2) \textnormal{ for $2^k + 2 \leq n \leq 2^{k+1} - 1$}.
\end{align}
By Observation 1 we have the expansion 
\begin{align}
\label{overlappinginequalitiesexpansion}
\lambda_{2n-2} < \lambda_{2n-1} < \lambda_{2n}  < \log_2 (2n-2) < \log_2 (2n-1) < \log_2 2n
\end{align}
for $2^k + 2 \leq n \leq 2^{k+1} - 1$, so that $\lambda_{m}  < \log_2 m$ for $2^{k+1} + 2 \leq m \leq 2^{k+2} - 2$. Conditions (\ref{minusapointtheorem}) and (\ref{plusapointtheorem}) then ensure that $\lambda_{m}  < \log_2 m$ for $m = 2^{k+1} + 1$ and $m=2^{k+2} - 1$ as well, proving the Theorem for $2^{k+1} \le n \le 2^{k+2}$.
Finally, note that (\ref{overlappinginequalitiesexpansion}) together with (\ref{minusapointtheorem}) and (\ref{plusapointtheorem}) imply that (\ref{strongercondition}) holds with $k$ replaced by $k+1$, so we may repeat our induction indefinitely.
\end{proof}

\begin{figure}[b!]
\begin{center}
\includegraphics[width=0.6\columnwidth]{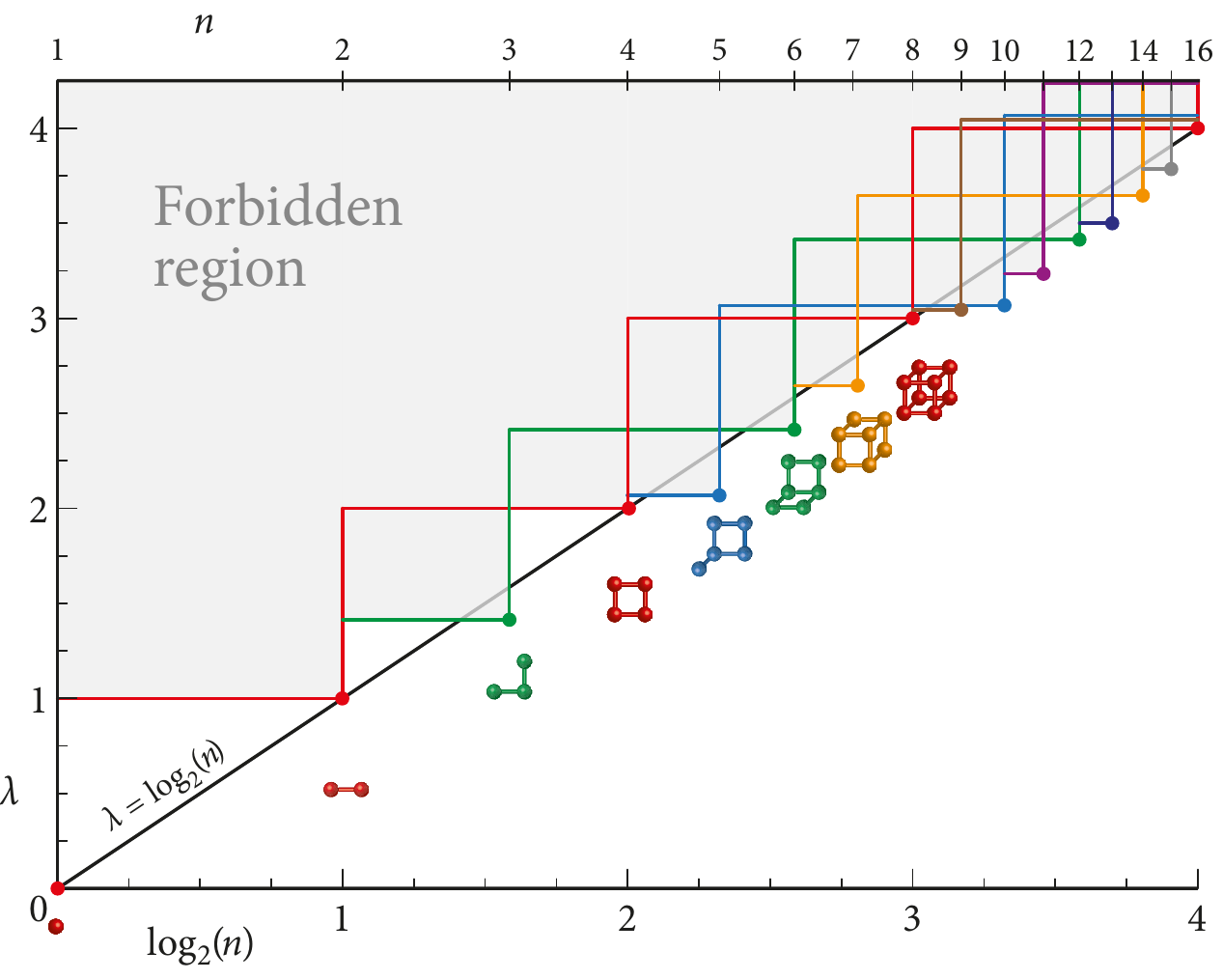}
\caption{\label{staircase_figure}
The ``staircase" argument for the proof of Lemma \ref{reducetoedgecases}. Because each bricklayer's graph is a subgraph of its successor, knowing the principal eigenvalue of a bricklayer's graph immediately places restrictions on higher-dimensional bricklayer's graphs (as demonstrated by the staircase figures of each color).
}
\end{center}
\end{figure}

Therefore, the Theorem reduces to (\ref{minusapointtheorem}) and (\ref{plusapointtheorem}), which we will prove by looking at polynomials that have the eigenvalues of our desired graphs as roots.
We will make use of the following standard theorem in linear algebra:

$\newline$
(Cauchy's Interlacing Theorem \cite{Hwang}).
\textit{Let $A$ be an $n \times n$ symmetric nonnegative matrix with eigenvalues $a_1 \leq \dots \leq a_n$, and let $B$ be an $m \times m$ principal submatrix of $A$ with eigenvalues $b_1 \leq \dots \leq b_m$. Then for all $j < m+1$, $a_j \leq b_j \leq a_{n-m+j}$.}

\section{Polynomials with eigenvalues $\lambda_{2^d \pm 1}$ as roots}
\noindent
Let $\chi_{n}$ be the characteristic polynomial of the adjacency matrix of $G_{n}$. Enumerating the hypercube spectrum, we find
\[
\chi_{2^d} (x) = \prod_{i=0}^d (x - (d - 2i))^{\binom{d}{i}},
\]
so we may define the polynomial
\begin{align}
\label{P2ddefinition}
P_{2^d} (x) = \frac{\chi_{2^d} (x)}{\prod_{i=1}^{d-1} (x - (d - 2i))^{\binom{d}{i}-1}} = \prod_{i=0}^d (x - (d - 2i)).
\end{align}

By applying Cauchy's Interlacing Theorem to the adjacency matrices of $G_{2^{d}+1}$ and $G_{2^d}$ (thus ``sandwiching" the spectrum of $\chi_{2^{d}+1}$ by that of $\chi_{2^{d}}$), we see from the multiplicity of the eigenvalues (of $\chi_{2^d}$) that $\chi_{2^d+1} (x)$ has the factor $\prod_{i=1}^{d-1} (x - (d - 2i))^{\binom{d}{i}-1}$. A similar argument shows that $\chi_{2^d-1} (x)$ has the same factor. So we may define the polynomials
\[
P_{2^d \pm 1} (x) = \frac{\chi_{2^{d} \pm 1} (x)}{\prod_{i=1}^{d-1} (x - (d - 2i))^{\binom{d}{i}-1}}.
\]
Furthermore, we see that $\chi_{2^d-1}$ has at most $d$ simple roots, all of which are also roots of the $d$-degree polynomial $P_{2^d-1}$.

The reason we use the polynomials $P$ is related to the fact that $G_{2^d-1}$ is a Hamming ball of radius $d-1$ in $H_{d,2}$. We define $B_{d,r}$, the $d$-dimensional ball of radius $r$, as the set of points in $H_{d,2}$ that are Hamming distance at most $r$ from the ``origin'' (the point labelled ``0'' according to the labelling scheme specified in the first paragraph of section 2). We determine recursive equations that give $\lambda$, the principal eigenvalue of $B_{d,r}$. Consider the corresponding eigenvector $w$, and note that by symmetry the component of $w$ corresponding to a given vertex depends only on the distance of the vertex from the origin. Therefore, let $w_k$ be the value of the component of $w$ corresponding to a vertex of distance $k$ from the origin. By matrix multiplication, we find that 
\begin{align}
\label{lambda1}
\lambda w_0 &= d w_1 \\
\label{lambda2}
\lambda w_k &= kw_{k-1} + (d-k)w_{k+1} ~\textnormal{for $1 \leq k < r$} \\
\label{lambda3}
\lambda w_r &= rw_{r-1}.
\end{align}
By setting $w_0 = 1$ and following the equations above for each fixed $r$, we find that the principal eigenvalue of $B_{d,r}$ is a root of the polynomial $p_r (\lambda)$, where $p_0 = \lambda$, $p_1 = \lambda^2 - d$, and
\begin{align*}
p_r &= \lambda p_{r-1} - r(d-r+1)p_{r-2} \textnormal{ for $r \geq 2$.}
\end{align*}

Applying this to $\lambda_{2^k-1}$, we can generate polynomials in $\lambda$ with coefficients in $d$ (say $f_{k}(d,\lambda)$) such that when $k$ is substituted for $d$, the resulting polynomial in $\lambda$ has $\lambda_{2^k-1}$ as a root. Then $f_{1}(d,\lambda) = \lambda$,
$f_{2}(d,\lambda) = \lambda^2 - d$ and, in general, 
\begin{align}
\label{frecursiverelation}
f_{k}(d,\lambda) = \lambda f_{k-1}(d,\lambda)- (k-1)(d-k+2)f_{k-2}(d,\lambda),
\end{align}
and $f_{k}(k,\lambda)$ has $\lambda_{2^k-1}$ as a root. In fact, $f_{k}(k,\lambda)$ has every simple eigenvalue of $G_{2^k-1}$ as a root (by the reasoning of the derivation). Since the degree of $f_{k}(k,\lambda)$ as a polynomial in $\lambda$ is $d$, and we have from above that $\chi_{2^d-1}$ has at most $d$ simple roots (all of which are also roots of the $d$-degree polynomial $P_{2^d-1}$), it must be the case that 
\begin{align}
\label{pequalsf}
P_{2^k-1}(\lambda) = f_{k}(k,\lambda).
\end{align}

\section{Bounding $\lambda_{2^d-1}$}
\noindent
Rewriting the right side of (\ref{minusapointtheorem}) by applying the Taylor expansion with Lagrange remainder gives
\begin{align*}
\log_2(2^d - 2) = d + \log_2 \left(1-\frac{2}{2^d} \right) >d + \frac{1}{\log 2} \left(-\frac{2}{2^d} - \frac{2^2}{2^{2d-1}} \right).
\end{align*}
Therefore, for $d \geq 5$,
\begin{align}
\label{minusapointlogbound}
\log_2(2^d - 2) > d - \frac32 \, \frac{2}{2^d}.
\end{align}

Now we deal with the left side of (\ref{minusapointtheorem}).
\begin{lemma}
\label{minusapointnonexplicitbound}
$\lambda_{2^d-1} < d - P_{2^d-1}(d)/P'_{2^d-1}(d)$.
\end{lemma}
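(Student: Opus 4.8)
The plan is to observe that $d$ is already an upper bound for $\lambda_{2^d-1}$ (it is the principal eigenvalue of the enclosing hypercube), and that the quantity on the right of the Lemma is precisely the value produced by a single Newton step from $d$ toward the largest root of $P_{2^d-1}$. The statement then reduces to the elementary fact that one Newton step from a point lying above the largest (simple) root of a real-rooted polynomial with positive leading coefficient lowers the point but does not push it past the root.

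First I would record the structural facts about $P := P_{2^d-1}$ already set up above. It is monic of degree $d$, and its $d$ roots are exactly the simple eigenvalues of $G_{2^d-1}$ (this is the content of $P_{2^k-1}(\lambda)=f_k(k,\lambda)$ together with the interlacing discussion), hence all real; write $P(x) = \prod_{j=1}^d (x-\rho_j)$ with $\rho_1 < \dots < \rho_d$. By Perron--Frobenius the principal eigenvalue $\lambda^* := \lambda_{2^d-1}$ is a simple eigenvalue of the connected graph $G_{2^d-1}$, so $\lambda^* = \rho_d$ is the largest root of $P$. Moreover $\lambda^* < d$ by Observation 1, since $G_{2^d-1}$ is a proper subgraph of the connected graph $G_{2^d}$, whose principal eigenvalue is $d$.

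Next I would check positivity of $P$ and its first two derivatives to the right of $\lambda^*$. Since all $\rho_j \le \lambda^*$ and $P$ is monic, $P(x) > 0$ and $P'(x) = P(x)\sum_j (x-\rho_j)^{-1} > 0$ for $x > \lambda^*$; and since $P$ has $d$ distinct real roots, Rolle's theorem gives $P'$ exactly $d-1$ real roots, the largest strictly between $\rho_{d-1}$ and $\rho_d = \lambda^*$, and then $P''$ exactly $d-2$ real roots, all below that, so $P''(x) > 0$ for $x > \lambda^*$ (this uses $d \ge 2$, which covers the range $d \ge 5$ needed later). Now, Taylor-expanding $P$ about $d$ with Lagrange remainder --- mirroring the step used for the logarithm in (\ref{minusapointlogbound}) --- gives, for some $\xi$ strictly between $\lambda^*$ and $d$,
\begin{align*}
0 \;=\; P(\lambda^*) \;=\; P(d) + P'(d)\,(\lambda^* - d) + \tfrac12\, P''(\xi)\,(\lambda^* - d)^2 .
\end{align*}
Because $\xi > \lambda^*$ we have $P''(\xi) > 0$, and $(\lambda^* - d)^2 > 0$ since $\lambda^* \neq d$; hence $P(d) + P'(d)(\lambda^* - d) < 0$. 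Dividing by $P'(d) > 0$ and rearranging yields $\lambda^* < d - P(d)/P'(d)$, which is the claim.

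The only step carrying real content here is the positivity input --- that $P_{2^d-1}$ is real-rooted with simple roots and that its largest root is exactly $\lambda_{2^d-1}$ --- but that has essentially been arranged in Section 3 via Cauchy interlacing and the identity $P_{2^k-1} = f_k(k,\lambda)$. Once it is in hand, the Lagrange-remainder computation is routine; the genuine work is postponed to bounding the ratio $P_{2^d-1}(d)/P'_{2^d-1}(d)$ from below, which is what makes Lemma \ref{minusapointnonexplicitbound} useful when combined with (\ref{minusapointlogbound}).
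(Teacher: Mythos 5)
Your proof is correct and follows essentially the same route as the paper: both arguments rest on the convexity of $P_{2^d-1}$ to the right of its largest root $\lambda_{2^d-1}$, from which the tangent line at $d$ (equivalently, your Newton step) underestimates the polynomial and yields $P_{2^d-1}(d) + P'_{2^d-1}(d)(\lambda_{2^d-1}-d) < 0$. The only cosmetic difference is that you establish $P'' > 0$ via Rolle/Taylor--Lagrange while the paper writes out the second derivative of a monic real-rooted product explicitly.
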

\begin{proof}
Note that the function $P_{2^d-1}(x)$ is convex on $x \geq \lambda_{2^d-1}$. To see this, let $f(x) = \prod_{i=1}^n (x-r_i)$ be any monic polynomial with all real roots and observe that
\begin{align*}
f''(x)
= 2 \sum_{\substack{j_1,j_2=1 \\ j_1 < j_2}}^n \left[ \prod_{i=1,i \neq j_1, i \neq j_2}^n (x-r_i) \right],
\end{align*}
which is always nonnegative if $x$ is at least the largest root of $f$. Now since the tangent linear approximation of a convex function is an underestimate, we obtain $P_{2^d-1}(d) + P'_{2^d-1}(d) \cdot (\lambda_{2^d-1}-d) < 0$, which implies the lemma.
\end{proof}

We evaluate the desired values of $P_{2^d-1}$ and its derivative using (\ref{pequalsf}) and the recursive relation in (\ref{frecursiverelation}).
\begin{lemma}
\label{minusapointp}
$P_{2^d-1}(d) = d!$
\end{lemma}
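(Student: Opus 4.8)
The plan is to evaluate $P_{2^d-1}(d)$ via the recursion \eqref{frecursiverelation}, using \eqref{pequalsf} to replace $P_{2^k-1}$ by $f_k(k,\lambda)$. The subtlety is that the recursion \eqref{frecursiverelation} defines $f_k(d,\lambda)$ with $d$ a free parameter, but the identity \eqref{pequalsf} only identifies $P_{2^k-1}(\lambda)$ with $f_k(\lambda)$ after the \emph{diagonal} substitution $d=k$. So I cannot simply iterate \eqref{frecursiverelation} at a fixed numerical value. Instead, first I would fix the target: set $d$ to be the specific integer in the statement, and aim to compute the number $f_d(d,d)$.

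The key step is to introduce the auxiliary sequence $g_k := f_k(d,d)$ for $k=0,1,\dots,d$, i.e. evaluate the two-variable polynomials along the line $\lambda=d$ but keeping the parameter equal to the fixed integer $d$ (not equal to the index $k$). By \eqref{frecursiverelation} this sequence satisfies the scalar recursion
\begin{align*}
g_k = d\, g_{k-1} - (k-1)(d-k+2)\, g_{k-2}, \qquad g_0 = 1,\ g_1 = d,
\end{align*}
where I have used $f_1(d,\lambda)=\lambda$ and the natural convention $f_0(d,\lambda)=1$ forced by $f_2=\lambda f_1 - (d) f_0$. The claim $P_{2^d-1}(d)=d!$ is exactly the claim $g_d = d!$, since $P_{2^d-1}(\lambda)=f_d(d,\lambda)$ by \eqref{pequalsf}, evaluated at $\lambda=d$.

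So the problem reduces to showing $g_d = d!$ for the explicit recursion above. I would prove the stronger closed form $g_k = \dfrac{d!}{(d-k)!}$ (the falling factorial $d^{\underline{k}}$) by induction on $k$: it clearly holds for $k=0,1$, and the inductive step is the elementary identity
\begin{align*}
d\cdot\frac{d!}{(d-k+1)!} - (k-1)(d-k+2)\cdot\frac{d!}{(d-k+2)!} = \frac{d!}{(d-k)!},
\end{align*}
which follows after clearing the common factor $d!/(d-k+2)!$ and checking $d(d-k+1)(d-k+2)-(k-1)(d-k+2)=(d-k+2)(d-k+1)(d-k)\cdot\frac{1}{?}$ — more directly, dividing through by $d!/(d-k+1)!$ reduces it to $d-(k-1)=d-k+1$, which is a triviality. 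Setting $k=d$ gives $g_d = d!/0! = d!$, which is the lemma.

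I expect the only real obstacle to be bookkeeping: making sure the right specialisation of the two-variable recursion is used (the fixed-parameter line $\lambda=d$, $\text{param}=d$, with the index $k$ running freely), rather than the diagonal $d=k$ that appears in \eqref{pequalsf}. Once the correct scalar recursion for $g_k$ is written down, the falling-factorial guess makes the induction immediate. An alternative, essentially equivalent route is to recognise that the recursion defining $p_r$ (equivalently $f_k$) at $\lambda = d$ is the three-term recurrence whose solution encodes the combinatorics of the Hamming ball layer sizes $\binom{d}{i}$; tracking those binomial coefficients through \eqref{lambda1}--\eqref{lambda3} with $\lambda=d$, $w_0=1$ also yields $w_k = \binom{d}{k}^{-1}\cdot(\text{something})$ and returns the same value $d!$, but the direct induction above is cleaner.
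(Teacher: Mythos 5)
Your proposal is correct and is essentially the paper's own argument: the paper likewise proves the strengthened claim $f_k(i,i) = (i)_k$ for all $i$ by induction on $k$ (your $g_k = f_k(d,d) = d!/(d-k)!$ is the same statement with $i$ renamed to the fixed target $d$), then specialises to $i=k$. The ``subtlety'' you flag about not inducting along the diagonal is precisely the reason the paper generalises from $f_k(k,k)$ to $f_k(i,i)$, so the two proofs coincide.
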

\begin{proof}
Recall the definition of $f_k(d,\lambda)$ from the previous section and the fact that $P_{2^k-1} (\lambda) = f_k(k,\lambda)$. We seek to prove that $f_{k}(k,k) = k!$, and to do this we will prove a stronger claim that for $i, k \in \mathbb{N}^+$,
\[
f_{k}(i,i) = (i)_k,
\]
where $(i)_k = i  (i-1)  (i-2) \cdots (i-k+1)$ is the Pochhammer symbol.
We use induction on $k$. For $k = 1$ we have $f_{1}(i,i) = i$ and $f_{2}(i,i) = i^2 - i = i(i-1)$. Supposing the claim is true for $f_{k-2}(i,i)$ and $f_{k-1}(i,i)$, we find from (\ref{frecursiverelation}) that
\begin{align*}
f_{k}(i,i) &= i f_{k-1}(i,i)- (k-1)(i-k+2)f_{k-2}(i,i) \\
&= i (i)_{k-1} - (k-1)(i-k+2) (i)_{k-2} \\
&= (i)_k.
\qedhere
\end{align*}
\end{proof}

\begin{lemma}
\label{minusapointpderivative}
\[
P'_{2^d-1}(d) = d! \sum_{j=0}^{d-1} \frac{2^j}{j+1}.
\]
\end{lemma}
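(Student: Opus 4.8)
The plan is to differentiate the recursion (\ref{frecursiverelation}) in $\lambda$, evaluate at $\lambda = d$, and then find a change of variables that makes the resulting recursion telescope. Set $B_k := \partial_\lambda f_k(d,\lambda)\big|_{\lambda = d}$; since $P_{2^d-1}(\lambda) = f_d(d,\lambda)$ by (\ref{pequalsf}), what we want is $P'_{2^d-1}(d) = B_d$. Differentiating (\ref{frecursiverelation}) and using the identity $f_k(i,i) = (i)_k$ established in the proof of Lemma \ref{minusapointp}, we get
\[
B_k = (d)_{k-1} + d\,B_{k-1} - (k-1)(d-k+2)\,B_{k-2}, \qquad B_0 = 0,\ B_1 = 1.
\]

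First I would normalise, setting $h_k := B_k/(d)_k$ (so $h_0 = 0$, $h_1 = 1/d$, and $P'_{2^d-1}(d) = d!\,h_d$). Dividing the recursion for $B_k$ by $(d)_k$ and using $(d)_k = (d)_{k-1}(d-k+1) = (d)_{k-2}(d-k+2)(d-k+1)$, all the awkward $k$-dependent factors cancel and one is left with $(d-k+1)h_k = 1 + d\,h_{k-1} - (k-1)h_{k-2}$. The decisive step is to pass to first differences $a_k := h_k - h_{k-1}$: writing $h_{k-2} = h_{k-1} - a_{k-1}$ makes the $d\,h_{k-1}$ and $(d-k+1)h_{k-1}$ terms recombine, leaving the \emph{first-order} recursion $(d-k+1)a_k = 1 + (k-1)a_{k-1}$ with $a_1 = 1/d$. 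Unfolding this --- equivalently, checking that $\beta_k := (d)_k a_k$ satisfies $\beta_k = (d)_{k-1} + (k-1)\beta_{k-1}$, so $\beta_k = \sum_{i=0}^{k-1}\tfrac{(k-1)!}{i!}(d)_i$ --- yields the closed form
\[
a_k = \frac{1}{k\binom{d}{k}}\sum_{i=0}^{k-1}\binom{d}{i}, \qquad\text{hence}\qquad P'_{2^d-1}(d) = d!\sum_{k=1}^{d} a_k .
\]

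It then remains to evaluate $\sum_{k=1}^d a_k$. I would feed in the Beta-integral $\frac{1}{k\binom{d}{k}} = \frac{1}{d\binom{d-1}{k-1}} = \int_0^1 t^{k-1}(1-t)^{d-k}\,dt$, so that $a_k = \int_0^1 t^{k-1}(1-t)^{d-k}\big(\sum_{i=0}^{k-1}\binom{d}{i}\big)\,dt$; swapping the finite sum with the integral, exchanging the orders of the $k$- and $i$-summations, and summing the geometric-type series $\sum_{m=i}^{d-1} t^m(1-t)^{d-1-m} = \tfrac{t^d - t^i(1-t)^{d-i}}{2t-1}$ gives $\sum_{k=1}^d a_k = \int_0^1 \tfrac{1}{2t-1}\sum_{i=0}^{d-1}\binom{d}{i}\big(t^d - t^i(1-t)^{d-i}\big)\,dt$. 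Two uses of the binomial theorem ($\sum_{i=0}^{d-1}\binom{d}{i} = 2^d-1$ and $\sum_{i=0}^{d-1}\binom{d}{i}t^i(1-t)^{d-i} = 1-t^d$) collapse the bracket to $2^d t^d - 1$, so the integrand is the polynomial $\tfrac{(2t)^d - 1}{2t-1} = \sum_{m=0}^{d-1}(2t)^m$, and integrating term by term gives $\sum_{m=0}^{d-1}\tfrac{2^m}{m+1}$. Thus $P'_{2^d-1}(d) = d!\sum_{j=0}^{d-1}\tfrac{2^j}{j+1}$.

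The main obstacle is this middle manipulation: the recursion for $B_k$ has $k$-dependent coefficients and $B_k$ itself (as a polynomial in $d$) has no tidy closed form, so a direct guess-and-induct fails; one really needs the normalisation by $(d)_k$ followed by the passage to first differences. Conceptually this is natural, because $h_d = \operatorname{tr}\big((dI - T)^{-1}\big)$ where $T$ is the $d\times d$ tridiagonal ``quotient'' matrix of the Hamming ball $B_{d,d-1}$ (with $\det(\lambda I - T) = P_{2^d-1}(\lambda)$), and $a_k = h_k - h_{k-1}$ is exactly the contribution of bordering the $(k-1)\times(k-1)$ principal block by one further row and column --- which is why the increment is the tractable variable. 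The closing integral is routine once assembled; alternatively one could finish by invoking the known evaluation $\sum_{k=0}^{n}\binom{n}{k}^{-1} = \tfrac{n+1}{2^{n+1}}\sum_{j=1}^{n+1}\tfrac{2^j}{j}$ after a short symmetry argument ($i \leftrightarrow d-1-i$).
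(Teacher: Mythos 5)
Your proof is correct, and it takes a genuinely different route from the paper's. Both arguments begin the same way, by differentiating the recursion (\ref{frecursiverelation}) in $\lambda$ and using $f_{k-1}(d,d)=(d)_{k-1}$ from the proof of Lemma \ref{minusapointp}, but they diverge at the key difficulty, namely that the resulting second-order recursion for $B_k=f'_k(d,\lambda)\vert_{\lambda=d}$ has $k$-dependent coefficients and no obvious closed form. The paper resolves this by strengthening the induction hypothesis: it guesses the two-parameter closed form $f'_k(k+i,k+i)=k!\sum_{j=0}^{k-1}\binom{k-j+i-1}{i}\frac{2^j}{j+1}$ and verifies it by a lengthy direct computation, after which setting $i=0$ gives the lemma with no further summation needed. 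You instead lower the order of the recursion: normalising by $(d)_k$ and passing to first differences $a_k=h_k-h_{k-1}$ collapses it to the first-order recursion $(d-k+1)a_k=1+(k-1)a_{k-1}$, which you solve exactly, and you then evaluate the remaining double sum $\sum_{k=1}^d\frac{1}{k\binom{d}{k}}\sum_{i=0}^{k-1}\binom{d}{i}$ via the Beta integral, the exchange of summation order, and the binomial theorem (I checked the geometric sum $\sum_{m=i}^{d-1}t^m(1-t)^{d-1-m}=\frac{t^d-t^i(1-t)^{d-i}}{2t-1}$ and the collapse of the bracket to $2^dt^d-1$; both are right, and the apparent singularity at $t=\tfrac12$ is removable since the integrand is the polynomial $\sum_{m=0}^{d-1}(2t)^m$). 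What the paper's approach buys is a short self-contained statement that can be verified mechanically once the formula is known; what yours buys is a derivation that does not require guessing the answer in advance, and a pleasant interpretation of $h_d$ as $P'/P=\operatorname{tr}((dI-T)^{-1})$ for the tridiagonal quotient matrix, which explains why the increments $a_k$ are the natural variables. The price is the extra summation step at the end, which the paper avoids entirely.
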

\begin{proof}
With respect to the polynomials $f_{k}(d,\lambda)$, let $f'_{k}(d,\lambda)$ denote $\frac{\partial }{\partial \lambda} f_{k}(d,\lambda)$. Then $P'_{2^k-1}(\lambda) = f'_{k}(k,\lambda)$, and $f'_{1}(d,\lambda) = 1$, $f'_{2}(d,\lambda) = 2\lambda$, and for $k \geq 3$, from (\ref{frecursiverelation}),
\begin{align*}
f'_{k}(d,\lambda) &= f_{k-1}(d,\lambda) + \lambda f'_{k-1}(d,\lambda) - (k-1)(d-k+2)f'_{k-2}(d,\lambda).
\end{align*}
We seek to prove that $f'_{k}(k,k) = k! \sum_{j=0}^{k-1} \frac{2^j}{j+1}$, and to do this we will prove a stronger claim that for integers $k > 0, i \ge 0$, 
\begin{align*}
f'_{k}(k+i,k+i) = k! \sum_{j=0}^{k-1} \binom{k-j+i-1}{i} \frac{2^{j}}{j+1}.
\end{align*}

We use induction on $k$. For the base cases $k = 1$ and $k = 2$ we find 
that $f'_{1}(1+i,1+i) = 1$ and $f'_{2}(2+i,2+i) = 2(2+i)$, as desired. Now supposing the claim is true for $f'_{k-2}(k+i,k+i)$ and $f'_{k-1}(k+i,k+i)$, it follows that

\begingroup
\allowdisplaybreaks

\small
\begin{align*}
f'_{k}(k+i,k+i) &= f_{k-1}(k+i,k+i) + (k+i) f'_{k-1}(k+i,k+i) - (k-1)(i+2)f'_{k-2}(k+i,k+i) \\
&= (k+i)_{k-1} + (k+i) (k-1)! \sum_{j=0}^{k-2} {\textstyle \frac{(k-j+i-1)!}{(i+1)!(k-j-2)!} \frac{2^{j}}{j+1} - (k-1)!} \sum_{j=0}^{k-3} \textstyle \frac{(k-j+i-1)!}{(i+1)!(k-j-3)!} \frac{2^{j}}{j+1} \\
&= {\textstyle \frac{(k+i)!}{(i+1)!} + (k-1) \frac{(k+i)!}{(i+1)!}} + \sum_{j=0}^{k-2} \left[ \textstyle \frac{(k-1)!}{(k-j-2)!} \frac{(k-j+i)!}{(i+1)!} + j  \frac{(k-1)!}{(k-j-2)!} \frac{(k-j+i-1)!}{(i+1)!} \right] {\textstyle \frac{2^{j}}{j+1}} -(k-1)(k-2) {\textstyle \frac{(k+i-1)!}{(i+1)!}} - \sum_{j=2}^{k-2} \textstyle \frac{(k-1)!}{(k-j-2)!} \frac{2^{j-1}}{j} \frac{(k-j+i)!}{(i+1)!} \\
&= {\textstyle \frac{(k+i)!}{(i+1)!} + (k-1) \textstyle \frac{(k+i)!}{(i+1)!} + (k-1)(k-2)\frac{(k+i-1)!}{(i+1)!}} + \sum_{j=2}^{k-2} \left[ \textstyle \frac{(k-1)!}{(k-j-2)!} \frac{2^j}{j+1} + (j-1)  \frac{(k-1)!}{(k-j-1)!} \frac{2^{j-1}}{j} \right] {\textstyle \frac{(k-j+i)!}{(i+1)!}} \\
&\phantom{=} + (k-2)(k-1)! {\textstyle \frac{2^{k-2}}{k-1} -(k-1)(k-2)\frac{(k+i-1)!}{(i+1)!}} - \sum_{j=2}^{k-2} {\textstyle \frac{(k-1)!}{(k-j-2)!} \frac{2^{j-1}}{j} \frac{(k-j+i)!}{(i+1)!} } \\
&= k {\textstyle \frac{(k+i)!}{(i+1)!}} + \sum_{j=2}^{k-2} \textstyle \left[ \textstyle \frac{(k-1)!}{(k-j-2)!} \left( \frac{2^j}{j+1} - \frac{2^{j-1}}{j} \right) + (j-1)  \frac{(k-1)!}{(k-j-1)!} \frac{2^{j-1}}{j} \right] \frac{(k-j+i)!}{(i+1)!} + [2(k-1)(k-1)! - k!] \textstyle \frac{2^{k-2}}{k-1} \\
&= k {\textstyle \frac{(k+i)!}{(i+1)!}} + \sum_{j=2}^{k-2} \textstyle \frac{k!}{(k-j-1)!} \left( \frac{2^j}{j+1} - \frac{2^{j-1}}{j} \right) \frac{(k-j+i)!}{(i+1)!} + k! \left( \frac{2^{k-1}}{k} - \frac{2^{k-2}}{k-1} \right) \\
&= k! \sum_{j=0}^{k-1} \left[ \textstyle \frac{(k-j+i)!}{(i+1)!(k-j-1)!} \frac{2^j}{j+1} - \frac{(k-j-1)(k-j+i-1)!}{(i+1)!(k-j-1)!} \frac{2^j}{j+1} \right] \\
&= k! \sum_{j=0}^{k-1} \textstyle \frac{(k-j+i-1)!}{i!(k-j-1)!} \frac{2^{j}}{j+1}.
\qedhere
\end{align*}
\normalsize

\endgroup

\end{proof}

Now substituting the results of Lemmas \ref{minusapointp} and \ref{minusapointpderivative} into Lemma \ref{minusapointnonexplicitbound},
\begin{align}
\label{minusapointbound}
\lambda_{2^d-1} < d - \frac{1}{\sum_{j=0}^{d-1} \frac{2^j}{j+1}}.
\end{align}
A simple induction with base cases $1 \le d \le 3$ shows that the sum in the denominator of the second term of (\ref{minusapointbound}) on the right-hand side can be bounded by
\begin{align}
\label{sumbound}
\sum_{j=1}^{d} \frac{2^j}{j} <  3 \, \frac{2^d}{d}.
\end{align}
Therefore,
\begin{align}
\label{minusapointsumbound}
d - \frac{1}{\sum_{j=0}^{d-1} \frac{2^j}{j+1}} < d - \frac{2}{3} \, \frac{d}{2^d}.
\end{align}
Since we may combine the bounds (\ref{minusapointsumbound}) and (\ref{minusapointlogbound}) for $d \geq 5$, we have proved (\ref{minusapointtheorem}).

\section{Bounding $\lambda_{2^d+1}$}
\noindent
Rewriting the right side of (\ref{plusapointtheorem}) in much the same way as the previous section, we find that for $d \geq 3$,
\begin{align}
\label{plusapointlogbound}
\log_2(2^d + \tfrac12) > d + \frac{\tfrac12}{2^d}.
\end{align}
As for the left side of (\ref{plusapointtheorem}), we have a computational shortcut:

\begin{lemma}
\label{Ge}
Let $G$ be a graph, and let $e$ be a bridge of $G$ (i.e., $e$ is an edge such that removing it would increase the number of connected components of $G$). Let $G^{*}$ be the graph $G$ with $e$ removed, and $G^{**}$ be the graph $G$ with $e$ and its endpoints removed. 
Then $\chi_{G} = \chi_{G^{*}} - \chi_{G^{**}}$, where $\chi_{G}$ is the characteristic polynomial of the adjacency matrix of $G$.
\end{lemma}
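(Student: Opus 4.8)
The plan is to reduce the identity to a product formula over the two pieces into which the bridge splits the graph, and then to prove that formula by a block-matrix (Schur complement) computation. Write $e = uv$. Since $e$ is a bridge, $G^{*} = G - e$ is the disjoint union of the connected component $H_{1}$ containing $u$ and the remaining graph $H_{2}$ (which contains $v$ and may itself be disconnected), while $G^{**} = G - u - v$ is the disjoint union of $H_{1} - u$ and $H_{2} - v$. Because the characteristic polynomial is multiplicative over disjoint unions, the Lemma is equivalent to
\[
\chi_{G}(x) = \chi_{H_{1}}(x)\,\chi_{H_{2}}(x) - \chi_{H_{1}-u}(x)\,\chi_{H_{2}-v}(x).
\]

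To prove this I would order the vertices so that those of $H_{1}$ come first, writing
\[
xI - A_{G} = \begin{pmatrix} xI - A_{1} & -E \\ -E^{\intercal} & xI - A_{2} \end{pmatrix},
\]
where $A_{1}, A_{2}$ are the adjacency matrices of $H_{1}, H_{2}$ and $E$ is the rectangular matrix whose only nonzero entry is a $1$ in the position indexed by $(u,v)$ (since $e$ is the unique edge joining the two pieces). For all $x$ outside the finite set of roots of $\chi_{H_{1}}$, taking the Schur complement of the invertible block $xI - A_{1}$ gives $\chi_{G}(x) = \chi_{H_{1}}(x)\,\det\big(xI - A_{2} - E^{\intercal}(xI - A_{1})^{-1}E\big)$. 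Now $E^{\intercal}(xI - A_{1})^{-1}E$ has a single possibly-nonzero entry, in position $(v,v)$, equal to $\alpha(x) := \big[(xI-A_{1})^{-1}\big]_{uu}$, so the second determinant is that of a rank-one perturbation of $xI - A_{2}$; the matrix determinant lemma collapses it to $\chi_{H_{2}}(x) - \alpha(x)\,\big[\operatorname{adj}(xI - A_{2})\big]_{vv} = \chi_{H_{2}}(x) - \alpha(x)\,\chi_{H_{2}-v}(x)$. Finally, by Cramer's rule $\alpha(x) = \chi_{H_{1}-u}(x)/\chi_{H_{1}}(x)$; substituting and multiplying through by $\chi_{H_{1}}(x)$ yields the displayed identity for all but finitely many $x$, hence identically, since both sides are polynomials in $x$.

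The computation is routine, so I do not expect a genuine obstacle; the only points needing care are (i) the bookkeeping of exactly which row and column are deleted in each cofactor, so that the $(u,u)$-cofactor of $xI - A_{1}$ really is $\chi_{H_{1}-u}$ and the $(v,v)$-cofactor of $xI - A_{2}$ really is $\chi_{H_{2}-v}$; (ii) the degenerate case $H_{1} = \{u\}$, handled by the convention that the characteristic polynomial of the empty graph equals $1$; and (iii) observing that nothing above needs $H_{2}$ to be connected, since only $\chi_{H_{2}}$ and $\chi_{H_{2}-v}$ enter and these are insensitive to that. As an alternative I could instead specialise the general edge-deletion formula for characteristic polynomials (Schwenk), $\chi_{G} = \chi_{G-e} - \chi_{G-u-v} - 2\sum_{C}\chi_{G-V(C)}$ with the sum over cycles $C$ through $e$, and note that a bridge lies on no cycle; but the self-contained argument above is the one I would present.
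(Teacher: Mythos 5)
Your proof is correct: the reduction to the product identity $\chi_{G}=\chi_{H_{1}}\chi_{H_{2}}-\chi_{H_{1}-u}\chi_{H_{2}-v}$ is valid (multiplicativity over disjoint unions applies to both $G^{*}$ and $G^{**}$), the off-diagonal block really is $e_{u}e_{v}^{\intercal}$ because a bridge is the unique edge joining $H_{1}$ to the rest, and the Schur complement, matrix determinant lemma, and Cramer's rule steps all check out, with the ``holds off a finite set, hence identically'' argument cleanly disposing of the invertibility caveats. The paper, by contrast, does not really prove the lemma at all: it cites Lov\'asz--Pelik\'an (who prove the forest case) and Stevanovi\'c (Theorem 1.3), and indicates only that the proof goes by Laplace expansion of $\det(xI-A_{G})$ along the row of an endpoint of $e$ together with multilinearity of the determinant. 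So your route is genuinely different in mechanism: the Laplace-expansion proof works one row at a time and does not need the global two-block structure, which is why it extends to Schwenk's general edge formula with the cycle correction terms; your argument instead exploits the bridge hypothesis from the outset to get a block matrix with a rank-one coupling, which buys a short, self-contained, citation-free proof but is intrinsically limited to the bridge case. Either is acceptable here; yours has the advantage of being complete, and the one point worth making explicit in a final write-up is the bookkeeping you already flag, namely that the $(u,u)$ and $(v,v)$ cofactors are exactly $\chi_{H_{1}-u}$ and $\chi_{H_{2}-v}$ (with positive sign, since the row and column indices coincide).
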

\begin{proof}
This is an extension of Lemma 1 in \cite{Lovasz}, which states the result when $G$ is a forest and $e$ is any edge, and is also Theorem 1.3 in \cite{stevanovic}. The proof involves expanding the matrix whose determinant is $\chi_{G}$, using Laplacian expansion and linearity of the determinant.
\end{proof} 

\begin{corollary}
\label{prelation}
$\chi_{2^d+1}(\lambda) = \lambda \chi_{2^d}(\lambda) - \chi_{2^d-1}(\lambda)$, and so dividing by $\prod_{i=1}^{d-1} (x - (d - 2i))^{\binom{d}{i}-1}$,
\[
P_{2^d+1}(\lambda) = \lambda P_{2^d}(\lambda) - P_{2^d-1}(\lambda).
\]
\end{corollary}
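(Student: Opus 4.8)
The plan is to apply Lemma~\ref{Ge} to $G_{2^d+1}$ with a carefully chosen bridge. First I would observe that $G_{2^d+1}$ is nothing but the $d$-cube $G_{2^d}=H_{d,2}$ with one extra vertex glued on, namely the vertex labelled $2^d$, whose base-$2$ string is $1\underbrace{0\cdots 0}_{d}$. A quick digit comparison shows that this extra vertex is adjacent only to the origin $0$: any $q<2^d$ has a $0$ in digit $d+1$ and so differs from $2^d$ there, hence can be adjacent to $2^d$ only if it agrees with it in every other digit, i.e.\ $q=0$. Consequently the edge $e$ joining $0$ and $2^d$ is a bridge of $G_{2^d+1}$, and Lemma~\ref{Ge} applies.

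Next I would work out the two reduced graphs of the lemma. Removing $e$ leaves $G_{2^d}$ together with an isolated vertex, whose characteristic polynomial is $\lambda\,\chi_{2^d}(\lambda)$. Removing $e$ together with its endpoints $0$ and $2^d$ leaves the $d$-cube with the origin deleted. Here I would invoke vertex-transitivity of the hypercube: deleting the origin produces a graph isomorphic to $H_{d,2}$ with the antipodal vertex $2^d-1$ deleted, which is exactly $G_{2^d-1}$, so its characteristic polynomial is $\chi_{2^d-1}(\lambda)$. Plugging these into Lemma~\ref{Ge} gives $\chi_{2^d+1}(\lambda)=\lambda\,\chi_{2^d}(\lambda)-\chi_{2^d-1}(\lambda)$. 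The second claimed identity then follows by dividing through by the common factor $\prod_{i=1}^{d-1}(x-(d-2i))^{\binom{d}{i}-1}$, which by the interlacing discussion of Section~3 divides each of $\chi_{2^d}$, $\chi_{2^d-1}$ and $\chi_{2^d+1}$ and turns them into $P_{2^d}$, $P_{2^d-1}$ and $P_{2^d+1}$ respectively.

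I do not expect a genuine obstacle: once the combinatorial picture is in place, the statement is a one-line consequence of Lemma~\ref{Ge}. The only steps needing a moment's care are the vertex-transitivity argument identifying ``$d$-cube minus the origin'' with $G_{2^d-1}$, and the observation that the same product of linear factors simultaneously divides all three characteristic polynomials, so that the identity descends cleanly to the polynomials $P$.
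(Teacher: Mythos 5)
Your proposal is correct and is exactly the argument the paper intends: the corollary is stated as an immediate consequence of Lemma~\ref{Ge}, applied to the bridge joining the pendant vertex $2^d$ to the origin of the $d$-cube, with the $d$-cube-minus-a-vertex identified with $G_{2^d-1}$ via vertex-transitivity. The paper leaves these details implicit, but your filling-in (including the divisibility of all three characteristic polynomials by the common factor) is accurate and complete.
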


\begin{lemma}
\label{plusapointnonexplicitbound}
$\lambda_{2^d+1} < d - P_{2^d+1}(d)/P'_{2^d+1}(d)$.
\end{lemma}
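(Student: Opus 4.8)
The plan is to follow the pattern of Lemma~\ref{minusapointnonexplicitbound}, but to exploit a sign change. There, $d$ lay to the \emph{right} of every root of $P_{2^d-1}$, which made the tangent-line argument immediate. Here, since $G_{2^d}$ is a proper connected subgraph of $G_{2^d+1}$, Observation~1 gives $\lambda_{2^d+1}>\lambda_{2^d}=d$, so $d$ now lies just to the \emph{left} of the largest root of $P_{2^d+1}$ and $P_{2^d+1}(d)<0$. The key geometric fact is that, by Cauchy's Interlacing Theorem applied to $G_{2^d}$ as a principal submatrix of $G_{2^d+1}$, every eigenvalue of $G_{2^d+1}$ except the principal one is $\le d$; together with $\lambda_{2^d+1}>d$, this means $d$ lies strictly between the largest root $\lambda_{2^d+1}$ of $P_{2^d+1}$ and all of its other roots.

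First I would evaluate the two quantities in the statement. By Corollary~\ref{prelation}, $P_{2^d+1}(d)=d\,P_{2^d}(d)-P_{2^d-1}(d)$; since $x=d$ is a root of $P_{2^d}(x)=\prod_{i=0}^d(x-(d-2i))$ we have $P_{2^d}(d)=0$, while Lemma~\ref{minusapointp} gives $P_{2^d-1}(d)=d!$, so $P_{2^d+1}(d)=-d!$. Differentiating Corollary~\ref{prelation} and using $P'_{2^d}(d)=\prod_{i=1}^d(2i)=2^d\,d!$ (the only term in which no factor vanishes) together with Lemma~\ref{minusapointpderivative},
\[
P'_{2^d+1}(d)=d\,P'_{2^d}(d)-P'_{2^d-1}(d)=d!\Bigl(d\,2^d-\sum_{j=0}^{d-1}\tfrac{2^j}{j+1}\Bigr)>0,
\]
the positivity because $\sum_{j=0}^{d-1}2^j/(j+1)<2^d\le d\,2^d$.

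Now write $P_{2^d+1}(x)=(x-\lambda_{2^d+1})\,g(x)$; since $\lambda_{2^d+1}$ is a simple eigenvalue of the connected graph $G_{2^d+1}$ (Perron--Frobenius), $g$ is a monic polynomial of degree $d+1$ whose roots are exactly the remaining roots of $P_{2^d+1}$. By the interlacing fact above and $P_{2^d+1}(d)=-d!\neq 0$, every root $s$ of $g$ satisfies $s<d$, so $g(d)=\prod_s(d-s)>0$ and $g'(d)/g(d)=\sum_s 1/(d-s)>0$. On the other hand, differentiating $g=P_{2^d+1}/(x-\lambda_{2^d+1})$ directly gives the rational identity $g'/g=P'_{2^d+1}/P_{2^d+1}-1/(x-\lambda_{2^d+1})$, valid at $x=d$, so
\[
0<\frac{g'(d)}{g(d)}=\frac{P'_{2^d+1}(d)}{P_{2^d+1}(d)}-\frac{1}{d-\lambda_{2^d+1}}=-\frac{P'_{2^d+1}(d)}{d!}+\frac{1}{\lambda_{2^d+1}-d}.
\]
Hence $1/(\lambda_{2^d+1}-d)>P'_{2^d+1}(d)/d!>0$; inverting gives $\lambda_{2^d+1}-d<d!/P'_{2^d+1}(d)=-P_{2^d+1}(d)/P'_{2^d+1}(d)$, that is, $\lambda_{2^d+1}<d-P_{2^d+1}(d)/P'_{2^d+1}(d)$, with strict inequality because $g$ has $d+1\ge 1$ roots.

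The only real content is the geometric picture established in the first paragraph: that $\lambda_{2^d+1}$ is the unique eigenvalue of $G_{2^d+1}$ above $d$, so that deflating $P_{2^d+1}$ by $(x-\lambda_{2^d+1})$ leaves a polynomial all of whose roots lie below $d$. That is what forces $g'(d)/g(d)>0$ and drives the estimate; the rest is the evaluation of $P_{2^d+1}$ and $P'_{2^d+1}$ at $d$ via Corollary~\ref{prelation} and Lemmas~\ref{minusapointp} and~\ref{minusapointpderivative}, followed by one line of algebra.
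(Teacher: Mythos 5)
Your proposal is correct, but it proves the lemma by a genuinely different mechanism than the paper. The paper's proof never deflates the polynomial: it differentiates the identity of Corollary~\ref{prelation} twice to get $P''_{2^d+1}(x) = 2P'_{2^d}(x) + x P''_{2^d}(x) - P''_{2^d-1}(x)$, argues this is nonnegative for $x \geq d$ (using the sign of $P'_{2^d}$ and the fact that the roots of $P_{2^d-1}$ interlace those of $P_{2^d}$, so $P''_{2^d} > P''_{2^d-1}$ there), and then applies the same tangent-line underestimate as in Lemma~\ref{minusapointnonexplicitbound}: convexity forces $P_{2^d+1}(d) + P'_{2^d+1}(d)(\lambda_{2^d+1}-d) < 0$. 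You instead write $P_{2^d+1}(x) = (x-\lambda_{2^d+1})g(x)$, use Perron--Frobenius for simplicity of the top root and Cauchy interlacing to place all roots of $g$ strictly below $d$, and read the bound off the partial-fraction identity $P'/P = \sum_i 1/(x-r_i)$ at $x=d$. This is a clean Laguerre/Newton-type argument that sidesteps the second-derivative comparison entirely, at the cost of pulling forward the evaluations $P_{2^d+1}(d)=-d!$ and $P'_{2^d+1}(d)>0$, which the paper defers to Lemmas~\ref{plusapointp} and~\ref{plusapointpderivative}; note that your route genuinely needs the sign $P'_{2^d+1}(d)>0$ to invert the inequality, whereas the paper's convexity argument delivers the conclusion without it. All the ingredients you invoke (connectedness of $G_{2^d+1}$, simplicity of the principal eigenvalue, interlacing against $G_{2^d}$, reality of the roots of $g$) check out, so the argument is sound.
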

\begin{proof}
From the preceding corollary, 
\begin{align*}
P''_{2^d+1}(x) = 2P'_{2^d}(x) + x P''_{2^d}(x) - P''_{2^d-1}(x).
\end{align*}
We wish to show that this is nonnegative on $x \geq d$. From the argument of the proof of Lemma \ref{minusapointnonexplicitbound}, $P'_{2^d}(x) \geq 0$ for $x \geq d$, and from the equation for the second derivative of a polynomial there, $P''_{2^d}(x) > P''_{2^d-1}(x)$ for $x \geq d$ since the roots of $P_{2^d-1}$ interlace those of $P_{2^d}$ by Cauchy's Interlacing Theorem. So $P_{2^d+1}(x)$ is convex on $x \geq d$, and so by linear approximation, $P_{2^d+1}(d) + P'_{2^d+1}(d) \cdot (\lambda(G_{2^d+1})-d) < 0$, which implies the lemma.
\end{proof}

Now as in the previous section, we evaluate the desired values of $P_{2^d+1}(d)$ and its derivative.
\begin{lemma}
\label{plusapointp}
$P_{2^d+1}(d) = -d!$.
\end{lemma}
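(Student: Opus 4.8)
The plan is to exploit the three-term recurrence for the $P$-polynomials supplied by Corollary~\ref{prelation} together with the two values we have already computed. From Corollary~\ref{prelation} we have $P_{2^d+1}(\lambda) = \lambda P_{2^d}(\lambda) - P_{2^d-1}(\lambda)$, so evaluating at $\lambda = d$ gives
\[
P_{2^d+1}(d) = d\, P_{2^d}(d) - P_{2^d-1}(d).
\]
By Lemma~\ref{minusapointp} we already know $P_{2^d-1}(d) = d!$, so the only remaining ingredient is the value $P_{2^d}(d)$. But this is immediate from the explicit product formula (\ref{P2ddefinition}): $P_{2^d}(x) = \prod_{i=0}^d (x-(d-2i))$, and substituting $x=d$ makes the $i=0$ factor $(d-d) = 0$ vanish, so $P_{2^d}(d) = 0$. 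Hence $P_{2^d+1}(d) = d\cdot 0 - d! = -d!$, as claimed.

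In more detail, the single step that needs care is the evaluation $P_{2^d}(d) = 0$: one must check that $d$ is genuinely a root of $P_{2^d}$, which it is because $d = d - 2i$ when $i=0$, and $i=0$ is included in the product range in (\ref{P2ddefinition}). (Equivalently, $d$ is the principal eigenvalue $\lambda_{2^d}$ of the full hypercube $H_{d,2}$, and $P_{2^d}$ retains every distinct hypercube eigenvalue as a simple root.) Everything else is a one-line substitution into the recurrence of Corollary~\ref{prelation}.

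I do not anticipate a genuine obstacle here; the lemma is a short corollary of material already established. If one preferred to avoid invoking (\ref{P2ddefinition}) directly, an alternative route is to mirror the proof of Lemma~\ref{minusapointp}: define $g_k(d,\lambda)$ analogously so that $P_{2^k+1}(\lambda) = g_k(k,\lambda)$, derive its recurrence from (\ref{frecursiverelation}) and Corollary~\ref{prelation}, and prove by induction on $k$ that $g_k(i,i) = -(i)_k$ for the relevant arguments. But this is strictly more work than the direct substitution above, so the product-formula argument is the one I would present.
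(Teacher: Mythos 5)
Your proposal is correct and is essentially identical to the paper's own proof: both evaluate the recurrence of Corollary~\ref{prelation} at $\lambda = d$, use $P_{2^d}(d)=0$ from the product formula (\ref{P2ddefinition}), and invoke Lemma~\ref{minusapointp} for $P_{2^d-1}(d)=d!$. No issues.
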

\begin{proof}
Using Corollary \ref{prelation} and Lemma \ref{minusapointp}, $P_{2^d+1}(d) = d p_{2^d}(d) - p_{2^d-1}(d) = 0 - d!$.
\end{proof}

\begin{lemma}
\label{plusapointpderivative}
\[
P'_{2^d+1}(d) = d! \left( d \, 2^d - \sum_{j=0}^{d-1} \frac{2^j}{j+1} \right).
\]
\end{lemma}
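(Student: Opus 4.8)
The plan is to differentiate the identity of Corollary \ref{prelation} and evaluate at $x = d$, reducing everything to quantities already computed. Applying $\tfrac{d}{dx}$ to $P_{2^d+1}(x) = x\, P_{2^d}(x) - P_{2^d-1}(x)$ gives
\begin{align*}
P'_{2^d+1}(x) = P_{2^d}(x) + x\, P'_{2^d}(x) - P'_{2^d-1}(x).
\end{align*}
Now I would substitute $x = d$ and use the three ingredients already in hand: first, $P_{2^d}(d) = \prod_{i=0}^d (d - (d-2i)) = \prod_{i=0}^d 2i = 0$ from the explicit product form in (\ref{P2ddefinition}) (the $i=0$ factor vanishes); second, $P'_{2^d-1}(d) = d! \sum_{j=0}^{d-1} \tfrac{2^j}{j+1}$ from Lemma \ref{minusapointpderivative}; and third, I need $P'_{2^d}(d)$. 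This last quantity is the only genuinely new computation, but it is immediate: since $P_{2^d}(x) = \prod_{i=0}^d (x - (d-2i))$ has $x = d$ as the simple root corresponding to $i = 0$, the product rule leaves only the term in which that factor is differentiated, so $P'_{2^d}(d) = \prod_{i=1}^d (d - (d-2i)) = \prod_{i=1}^d 2i = 2^d\, d!$.

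Assembling the pieces yields
\begin{align*}
P'_{2^d+1}(d) = 0 + d \cdot 2^d\, d! - d! \sum_{j=0}^{d-1} \frac{2^j}{j+1} = d!\left( d\, 2^d - \sum_{j=0}^{d-1} \frac{2^j}{j+1}\right),
\end{align*}
which is the claimed formula. The argument is essentially a one-line differentiation followed by three substitutions, so there is no real obstacle; the only point requiring a moment's care is the evaluation $P'_{2^d}(d) = 2^d\, d!$, and even that follows directly from the closed product form of $P_{2^d}$ established in (\ref{P2ddefinition}) together with the observation that all but one summand in the derivative of a product vanish at a simple root. One could alternatively derive $P_{2^d}(d) = 0$ and $P'_{2^d}(d) = 2^d d!$ from the recursion for $f_k$ as was done for $P_{2^d-1}$ in Lemmas \ref{minusapointp} and \ref{minusapointpderivative}, but invoking the explicit factorization is cleaner and avoids a second induction.
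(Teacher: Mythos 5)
Your proposal is correct and follows the paper's own argument essentially verbatim: differentiate the identity of Corollary \ref{prelation}, evaluate at $x=d$, and feed in $P_{2^d}(d)=0$, $P'_{2^d}(d)=2^d\,d!$ from the product form (\ref{P2ddefinition}), and $P'_{2^d-1}(d)$ from Lemma \ref{minusapointpderivative}. The only difference is that you spell out the evaluation $P'_{2^d}(d)=2^d\,d!$ in more detail than the paper does.
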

\begin{proof}
Differentiating (\ref{P2ddefinition}) and then plugging in $d$ gives $P'_{2^d}(d) = 2^d d!$.
Then using Corollary \ref{prelation} and Lemma \ref{minusapointpderivative},
\begin{align*}
P'_{2^d+1}(d) &= P_{2^d}(d) + d  P'_{2^d}(d) - P'_{2^d-1}(d) \\
&= 0 + d  \, 2^d d! - d! \sum_{j=0}^{d-1} \frac{2^j}{j+1} \\
&= d! \left( d \, 2^d - \sum_{j=0}^{d-1} \frac{2^j}{j+1} \right).
\qedhere
\end{align*}
\end{proof}

Substituting the results of Lemmas \ref{plusapointp} and \ref{plusapointpderivative} into Lemma \ref{plusapointnonexplicitbound},
\begin{align}
\label{plusapointbound}
\lambda_{2^d+1} < d + \frac{1}{d \, 2^d - \sum_{j=0}^{d-1} \frac{2^j}{j+1}}.
\end{align}
From (\ref{sumbound}), we obtain that for $d > 1$,
\begin{align}
\label{plusapointsumbound}
d + \frac{1}{d \, 2^d - \sum_{j=0}^{d-1} \frac{2^j}{j+1}} < d + \frac{1}{(d - \frac{3}{2d})2^d}.
\end{align}
Since we may combine the bounds (\ref{plusapointsumbound}) and (\ref{plusapointlogbound}) for $d \geq 5$, we have proved (\ref{plusapointtheorem}). 

This concludes the proof of the Theorem. 

\section{Conclusion}
\noindent
As an additional remark, if we bound $\log_2 (2^d -N)$ for general $N$ in the manner of (\ref{minusapointlogbound}), we can deduce an asymptotic result: for any $N$, there exists $D$ so large that $\lambda_{2^d-1} < \log_2 (2^d - N)$ for all $d > D$. Similarly generalizing (\ref{plusapointlogbound}) leads to the result that for any $\epsilon > 0$, there exists $D$ so large that $\lambda_{2^d+1} < \log_2 (2^d + \epsilon)$ for all $d > D$.

Throughout most of this paper, we have set $a=2$.
We conjecture an extension of the Theorem for general $a$:

\begin{conjecture*}
For all graphs $G_{n,a}$, we have $\lambda_{n,a} \le (a-1) \log_a n$, with equality if and only if $n$ is a power of $a$.
\end{conjecture*}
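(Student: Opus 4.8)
The plan is to mimic the structure of the $a=2$ proof, adapting each of its three pillars---(i) the staircase/induction reduction, (ii) the Hamming-ball recursion producing polynomials with the critical eigenvalues as roots, and (iii) the convexity-plus-tangent-line bound---to the general alphabet. First I would set up the analogue of Observations 1 and 2: Observation 1 (subgraph monotonicity) holds verbatim since $G_{n,a}$ is still a proper connected subgraph of $G_{n+1,a}$; Observation 2 must be replaced by the fact that $G_{a\cdot n, a} = G_{n,a}\,\Box\,K_a$, so that $\lambda_{an,a} = \lambda_{n,a} + (a-1)$ because the principal eigenvalue of $K_a$ is $a-1$. Thus if $\lambda_{n,a} < (a-1)\log_a n$ then $\lambda_{an,a} < (a-1)\log_a(an)$, giving the same self-similar scaling. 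The staircase argument then reduces the Conjecture, exactly as Lemma~\ref{reducetoedgecases} does, to a finite base case together with a strengthened strict inequality on each ``block'' $a^k < n < a^{k+1}$, plus a handful of boundary cases near $n = a^d$ --- now the cases $n = a^d - (a-1), \dots, a^d - 1$ and $n = a^d + 1, \dots, a^d + (a-1)$, since removing or adding a single ``brick'' (a copy of $K_a$ along the last coordinate, or a partial one) is the natural elementary move.

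Next I would generalize Section 3. The graph $G_{a^d - 1, a}$ is no longer quite a Hamming ball, but $G_{a^d,a} = H_{d,a}$ minus one vertex; more usefully, the relevant near-extremal graphs are $H_{d,a}$ with a lower-dimensional face removed or adjoined. For a genuine Hamming ball $B_{d,r}$ in $H_{d,a}$ one still has a distance-based eigenvector $w_k$ and a three-term recursion: writing $N_k = \binom{d}{k}(a-1)^k$ for the size of the $k$-sphere and counting neighbours, $\lambda w_k = k\,w_{k-1} + (d-k)(a-1)\,w_{k+1} + k(a-2)\,w_k$ for the interior, with the obvious boundary modifications at $k=0$ and $k=r$. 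This yields a family of orthogonal-type polynomials $p_r(\lambda)$ (a Hahn/Krawtchouk-type family, since these are the relevant association-scheme polynomials for $H_{d,a}$) satisfying $p_r = (\lambda - r(a-2))\,p_{r-1} - r(d-r+1)(a-1)\,p_{r-2}$. Using Cauchy interlacing against the known spectrum of $H_{d,a}$ --- whose eigenvalues are $(d-2i)(a-1) - \text{corrections}$; more precisely $d(a-1) - ai$ with multiplicity $\binom{d}{i}(a-1)^i$ --- I would strip the high-multiplicity factors from $\chi$ to define low-degree polynomials $P_{a^d\pm j}(x)$ whose largest root is $\lambda_{a^d\pm j,a}$, and identify $P_{a^d-j}$ with the explicit recursion polynomial $f_k$ as in \eqref{pequalsf}.

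Then Sections 4--5 go through provided one can evaluate $P(d(a-1))$ and $P'(d(a-1))$ in closed form, since $d(a-1)$ is the full degree of $H_{d,a}$ and plays the role that $d$ played before. I expect $P_{a^d - j}\big(d(a-1)\big)$ to again be a clean product (a Pochhammer-like expression times a factor depending on $j$ and $a$), with the derivative a product times a partial sum of the shape $\sum_j \frac{a^j}{j+1}$ or similar; the convexity argument of Lemma~\ref{minusapointnonexplicitbound} is purely about monic real-rooted polynomials and transfers unchanged, giving $\lambda_{a^d\pm j,a} \lessgtr d(a-1) \mp P(d(a-1))/P'(d(a-1))$. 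Comparing against the Taylor expansion of $(a-1)\log_a(a^d \pm \text{const})$, which to first order contributes a term of size $\Theta\!\big(a^{-d}\big)$ times $(a-1)/\log a$, then reduces everything to a bound on the relevant exponential partial sum, analogous to \eqref{sumbound}.

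\emph{The main obstacle.} For $a=2$ the only boundary cases were $n = 2^d \pm 1$, a single subtracted or added vertex; for general $a$ there are $a-1$ cases on each side, and $G_{a^d-j,a}$ for $1 \le j \le a-1$ is $H_{d,a}$ with $j$ vertices of a single $K_a$-fibre removed, which is no longer vertex-distance-transitive and so lacks the clean $w_k$ reduction that produced the three-term recursion. The hard part will be obtaining a workable low-degree polynomial (or a sufficiently tight eigenvalue bound by another route --- e.g. an explicit near-extremal test vector together with a Rayleigh-quotient estimate, or interlacing with an equitable partition of $G_{a^d-j,a}$) whose largest root one can then compare to $(a-1)\log_a(a^d-j)$; managing the combinatorics of these $j$ intermediate cases uniformly in $a$, rather than just the two endpoint cases, is where I expect the real work to lie, and it is conceivable that a slightly different strengthening of the block inequality \eqref{strongercondition} is needed to avoid having to treat all of them.
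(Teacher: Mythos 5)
The statement you are asked to prove is left \emph{open} in the paper: it is stated as a conjecture and no proof is given, so there is nothing to compare your argument against except the $a=2$ machinery it would have to generalize. Judged on its own terms, your proposal is a research plan rather than a proof, and it does not close the gap. The sound parts are the generalized Observation 2 (indeed $G_{an,a}=G_{n,a}\,\Box\,K_a$ and the principal eigenvalue of $K_a$ is $a-1$, so $\lambda_{an,a}=\lambda_{n,a}+(a-1)$), the spectrum of $H_{d,a}$ (eigenvalues $d(a-1)-ai$ with multiplicity $\binom{d}{i}(a-1)^i$), and the three-term recursion for genuine Hamming balls in $H_{d,a}$. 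But the decisive steps are only conjectured: you write that you ``expect'' $P_{a^d-j}(d(a-1))$ and its derivative to have clean closed forms, without deriving them, and the analogue of the partial-sum bound \eqref{sumbound} is never established. A proof cannot rest on expected formulas.

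The obstacle you flag at the end is in fact fatal to the plan as written, not merely ``where the real work lies.'' The whole $a=2$ argument funnels into two boundary cases, $n=2^d\pm1$, precisely because $G_{2^d-1}$ \emph{is} a Hamming ball and therefore admits the distance-based eigenvector reduction \eqref{lambda1}--\eqref{lambda3}, which produces a degree-$d$ polynomial that can be evaluated exactly at $x=d$. For $a\ge3$ the graphs $G_{a^d-j,a}$, $1\le j\le a-1$, are $H_{d,a}$ with part of a single $K_a$-fibre deleted; they are not Hamming balls and have no distance-transitive structure, so no analogue of \eqref{pequalsf} is available and Lemmas \ref{minusapointp} and \ref{minusapointpderivative} have no counterpart. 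You name possible substitutes (Rayleigh quotients, equitable partitions) but do not carry any of them out, and it is exactly these estimates that must beat $(a-1)\log_a(a^d-j)$ to within $O(a^{-d})$. Two further gaps: the bridge identity of Lemma \ref{Ge} used to relate $\chi_{2^d+1}$ to $\chi_{2^d}$ and $\chi_{2^d-1}$ has no analogue when the appended vertices form a partial $K_a$ rather than a pendant edge; and the numerical base case of Lemma \ref{reducetoedgecases} becomes an infinite family of base cases, one for each $a$, which must be handled uniformly in $a$ for the induction to prove the conjecture as stated. The proposal is a sensible outline of how one might attack the problem, but it does not constitute a proof.
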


There are several interesting and potentially important questions that we have not considered here, which merit further investigation.
We prove that the form of the hypercube subgraph with maximal eigenvalue is a bricklayer's graph for small $d$ but the general form of the maximizers is unknown. 
Indeed it is an open avenue of study to find even non-trivial bounds on the eigenvalue of a hypercube subgraph in terms of its number of vertices. 
While for small dimension $d$ the bricklayer's graphs are optimal, for $s \ge 20$ and $d \ge s-1$, Hamming balls of radius 1 are superior. 
For large $s$ and $d\ll s$, Hamming balls of larger radius may eventually dominate, but this is unproven. 
How these transitions extend to larger values of alphabet size $a$ is also an open question, though it seems that the critical dimension separating bricklayer's graphs and balls grows with $a$. 
We hope that further research by others will shed light on these questions.






\end{document}